\theoremstyle{plain}
\newtheorem{theorem}{Theorem}
\newtheorem{corollary}[theorem]{Corollary}
\theoremstyle{remark}
\newtheorem{remark}{Remark}
\newcommand{\sgp}{semi\-group}
\newcommand{\sgps}{semi\-groups}
\DeclareMathOperator{\Id}{\mathrm{Id}}
\def\malcev{\mathop{\hbox{$\bigcirc$\kern-9.5pt\raise1pt\hbox{\scriptsize$m$}\kern1.5pt}}}
\title[The identities of free products of two-element monoids]{The identities of the free product\\ of a pair of two-element monoids}
\author{M. V. Volkov}
\address{Institute of Natural Sciences and Mathematics, Ural Federal University,
Lenina 51, 620000 Ekaterinburg, Russia} \email{m.v.volkov@urfu.ru}
\begin{document}

\begin{abstract}
Up to isomorphism, there exist two non-isomorphic two-el\-ement monoids. We show that the identities of the free product of every pair of such monoids admit no finite basis.
\end{abstract}

\maketitle

Let $S_2$ stand for the \sgp\ defined by the \sgp\ presentation $\langle e,f \mid e^2=e,\ f^2=f\rangle$; in other words, $S_2$ is the free product of two one-element semigroups (in the category of semigroups). It is known (and easy to verify) that $S_2$ is the only free product in the category of semigroups that satisfies a nontrivial identity~\citep{Sh72}.
Shneerson and the author \citeyearpar{SV17} have characterized the identities of $S_2$ and proved that these identities admit a finite basis.

In the present note we address the identities of free products in the category of monoids considered as algebras of type (2,0). If $M_1$ and $M_2$ are two monoids, then when constructing their monoidal free product $M_1*M_2$, one has to amalgamate the identity elements of $M_1$ and $M_2$. Therefore the free monoidal product of two one-element monoids is again the one-el\-ement monoid. Moreover, it is evident that the product $M_1*M_2$ is isomorphic to one of its factors whenever the other factor is the one-element monoid. In view of this observation, if we want the operation of free product to produce something new, we have to assume that both $M_1$ and $M_2$ contain at least two elements. On the other hand, if one of the factors $M_1$ of $M_2$ contains at least three elements, $M_1*M_2$ cannot satisfy any nontrivial identity. Indeed, let $|M_1|\ge2$ and $|M_2|\ge3$, say. Take $a\in M_1\setminus\{1\}$ and let $b,c\in M_2\setminus\{1\}$ be such that $b\ne c$. Then it is easy to see that the elements $ab$ and $ac$ generate a free sub\sgp\ in the free product $M_1*M_2$.

Thus, studying identities of the free product $M_1*M_2$ makes sense only if both $M_1$ and $M_2$ consist of two elements. Up to isomorphism, there exist two non-isomorphic two-element monoids: one is the two-element idempotent monoid, which we denote by $\mathbb{I}_2$, and the other one is the two-element group, which we denote by $\mathbb{C}_2$.
Therefore, up to isomorphism, the free products to be considered are $\mathbb{I}_2*\mathbb{I}_2$, $\mathbb{C}_2*\mathbb{C}_2$, and $\mathbb{I}_2*\mathbb{C}_2$. These free products can be defined by the following monoid presentations:
\begin{gather}
\mathbb{I}_2*\mathbb{I}_2=\langle e,f \mid e^2=e,\ f^2=f\rangle,\label{eq:1}\\
\mathbb{C}_2*\mathbb{C}_2=\langle a,b \mid a^2=1,\ b^2=1\rangle,\label{eq:2}\\
\mathbb{I}_2*\mathbb{C}_2=\langle e,b \mid e^2=e,\ b^2=1\rangle.\label{eq:3}
\end{gather}

The monoid defined by the presentation \eqref{eq:1} is denoted $J_\infty$. Observe that the monoid presentation \eqref{eq:1} looks exactly as the semigroup presentation used above to define the semigroup $S_2$, whence the monoid $J_\infty$  is nothing but the semigroup $S_2$ with identity element adjoined. Shneerson and the author \citeyearpar{SV17} have shown that the identities of $S_2$ with identity element adjoined are not finitely based.

It is easy to see that the presentation \eqref{eq:2} defines a group known in the literature as the \emph{infinite dihedral group} $D_\infty$. The group $D_\infty$ is an extension of the infinite cyclic group generated by the element $ba$ by the two-element group; in particular, $D_\infty$ is a metabeian group. A general result by \citet{Sa87} implies that if the semigroup identities of a group $G$ are finitely based and $G$ is an extension of an abelian subgroup by a group of finite exponent, then $G$ either is abelian or has finite exponent; see \citep[Proposition~6]{Sa87}. Applying this result to $D_\infty$, we conclude that the semigroup identities of $D_\infty$ are not finitely based. Observe that, in contrast, the \textbf{group} identities of $D_\infty$ are finitely based; this follows from another general result, due to \citet{Co67}, who proved that the group identities of any metabelian group admit a finite basis.

Thus, it remains to analyze the identities of the monoid defined by the presentation \eqref{eq:3}. This monoid is generated by an idempotent and an involution, and we denote it by $K_\infty$, having in mind Kuratowski's closure-complement theorem\footnote{The classic version of Kuratowski's closure-complement theorem basically describes the monoid generated by two operators on subsets of an arbitrary topological space: the operator of taking the closure of a subset and the operator of forming the complement of a subset; see the excellent survey by \cite{GJ08} for quite a comprehensive treatment. Many generalizations have been considered in which the operator of taking closure has been substituted by various idempotent operators while the operator of forming complement has been substituted by various involutive operators; some of these generalizations are surveyed in \citep[Subsection 4.2]{GJ08}. Clearly, all monoids of operators arising this way are homomorphic images of the monoid $K_\infty$.}. The main result of the present note is the following
\begin{theorem}
\label{thm:K_infty}
The identities of the monoid $K_\infty$ are not finitely based.
\end{theorem}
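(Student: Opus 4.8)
The plan is to reduce the assertion to the already established non-finite-basis property of $J_\infty$ obtained by \citet{SV17}. The starting point is that $J_\infty$ sits inside $K_\infty$: putting $f:=beb$ one checks $f^2=beb\,beb=be\,e\,b=beb=f$, so the submonoid $\langle e,f\rangle$ of $K_\infty$ is a homomorphic image of $J_\infty$, and inspecting reduced words over $\{e,b\}$ shows that the map is injective; thus $J_\infty$ embeds into $K_\infty$, and therefore every identity of $K_\infty$ holds in $J_\infty$. The heart of the argument is the reverse inclusion: I would show that, conversely, every identity of $J_\infty$ holds in $K_\infty$, so that $\var(K_\infty)=\var(J_\infty)$ and the two monoids have literally the same identities. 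Granting this, Theorem~\ref{thm:K_infty} is immediate, since two monoids with the same identities are simultaneously finitely based or not, and $J_\infty$ is not finitely based.

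To prove that $K_\infty\in\var(J_\infty)$ I would exploit the grading of $K_\infty$ by $\mathbb{Z}_2$. Since $b^2=1$, conjugation $w\mapsto bwb$ defines an involutory automorphism $\alpha$ of $\langle e,f\rangle\cong J_\infty$ interchanging $e$ and $f$; hence every element of $K_\infty$ is uniquely of the form $g$ or $gb$ with $g\in J_\infty$, that is, $K_\infty=J_\infty\cup J_\infty b$, and $bw=\alpha(w)b$ for $w\in J_\infty$. Given an identity $u\approx v$ of $J_\infty$ and a substitution sending each variable $x_i$ to $g_ib^{\varepsilon_i}$, collecting all the $b$'s to the right rewrites the two sides as $G_ub^{\delta_u}$ and $G_vb^{\delta_v}$, where $\delta_w$ is the number of odd occurrences modulo $2$ and $G_w\in J_\infty$ is an $\alpha$-twisted product. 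A short preliminary lemma---substitute every variable but one by $1$ and the remaining one by the infinite-order element $ef$---shows that any identity of $J_\infty$ is balanced, each variable occurring equally often on both sides; this already yields $\delta_u=\delta_v$. It remains to verify the twist-invariance $G_u=G_v$.

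The twist-invariance is the step I expect to be the main obstacle. Concretely, $G_w$ is obtained from $w$ by tagging each occurrence with the parity of the number of $b$-carrying variables to its left and then reading $x_i$ as $g_i$ on even tags and as $\alpha(g_i)$ on odd tags. Changing the parity at one point $\alpha$-conjugates an entire terminal segment of the product, and because $\alpha$ is an automorphism this recolouring interacts with the collapses $e^2=e$ and $f^2=f$ in a controlled way; direct calculation on short identities strongly suggests that these collapses always absorb the discrepancy, so that $G_u=G_v$ for every identity of $J_\infty$. Turning this into a proof is where the real work lies, most plausibly by invoking the explicit description of the identities of $J_\infty$ from \citet{SV17} and checking the parity-tagged versions directly. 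Should the equality $\var(K_\infty)=\var(J_\infty)$ fail, the same conclusion is still reachable through a transfer principle: if $\var(J_\infty)$ were defined inside $\var(K_\infty)$ by finitely many additional identities $\Delta$, then any finite basis $\Sigma$ of $K_\infty$ would make $\Sigma\cup\Delta$ a finite basis of $J_\infty$, contradicting \citet{SV17}; so in that case one would instead argue that $\var(J_\infty)$ is a finitely based subvariety of $\var(K_\infty)$.
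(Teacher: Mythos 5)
Your central claim---that $\var(K_\infty)=\var(J_\infty)$---is false, so the main route collapses at exactly the step you yourself flag as the obstacle. The identity $x^2yx\bumpeq xyx^2$ holds in $J_\infty$ (this follows from the description of $\Id(J_\infty)$ in \citet{SV17}) but fails in $K_\infty$: substituting $x\mapsto b$ and $y\mapsto e$ gives $x^2yx\mapsto b^2eb=eb$ and $xyx^2\mapsto beb^2=be$, and $eb\ne be$ in $K_\infty$. In your own notation this is precisely a failure of the twist-invariance $G_u=G_v$: taking $g_x=1$, $\varepsilon_x=1$, $g_y=e$, $\varepsilon_y=0$ yields $G_u=e$ while $G_v=\alpha(e)=beb$, and the collapses $e^2=e$, $f^2=f$ do not absorb the discrepancy. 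Your embedding $J_\infty\cong\langle e,\,beb\rangle\le K_\infty$ is correct, but it gives only the inclusion $\Id(K_\infty)\subseteq\Id(J_\infty)$, which points the wrong way for transferring non-finite-basedness: a submonoid of a non-finitely based monoid may perfectly well be finitely based. The fallback ``transfer principle'' is sound as a conditional statement, but it requires proving that $\var(J_\infty)$ is defined inside $\var(K_\infty)$ by finitely many additional identities; you do not attempt this, and it would require detailed knowledge of $\Id(K_\infty)$ that is at least as hard to obtain as the theorem itself. As written, the proposal is a plan whose main branch is refuted and whose backup branch is unproven.

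For comparison, the paper proceeds quite differently and never compares $\Id(K_\infty)$ with $\Id(J_\infty)$: it applies the sufficient condition of \citet{ACHLV15} (Theorem~\ref{thm:main}), checking first that $K_\infty$ lies in $\mathbf{Com}\malcev\mathbf{Fin}$ by means of the homomorphism onto the six-element monoid $T=\langle f,g \mid f^2=fgf=f,\ g^2=1\rangle$, whose kernel classes that are subsemigroups turn out to be commutative, and then that every Zimin word $Z_n$ is an isoterm relative to $K_\infty$ by induction on $n$, using balancedness (via the copy of $\mathbb{N}$ inside $K_\infty$) together with the uniqueness of the alternating $\{e,b\}$-representation of elements. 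If you want to salvage your idea of leaning on \citet{SV17}, the viable route is to borrow their \emph{method} (which is exactly this one) rather than their \emph{conclusion} about $J_\infty$.
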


We prove Theorem~\ref{thm:K_infty} re-using the technique that was applied by Shneerson and the author \citeyearpar{SV17} to prove the analog of this theorem for the monoid $J_\infty$. (In fact, the same technique could have been applied to show the absence of a finite basis for the semigroup identities of the group $D_\infty$.) The technique stems from \cite{ACHLV15}; in order to present it, we need  to recall three concepts.

The first concept is that of Mal'cev product. The \emph{Mal'cev product} of two classes of \sgps\ $\mathbf{A}$ and $\mathbf{B}$, say, is the class $\mathbf{A}\malcev\mathbf{B}$ of all \sgps\ $S$ for which there exists a congruence $\theta$ such that the quotient \sgp\ $S/\theta$ lies in $\mathbf{B}$ while all $\theta$-classes that are sub\sgps\ in $S$ belong to $\mathbf{A}$. Notice that a $\theta$-class forms a subsemigroup of $S$ if and only if the class is an idempotent of the quotient $S/\theta$. We denote by $\mathbf{Com}$ and $\mathbf{Fin}$ the classes of all commutative \sgps\ and all finite semigroups, respectively.

The next concept we need is that of a Zimin word. Let $x_1,x_2,\dots,x_n,\dots$ be a sequence of letters. The sequence $\{Z_n\}_{n=1,2,\dots}$ of \emph{Zimin words} is defined inductively by
\[
Z_1(x_1):=x_1,\quad Z_{n+1}(x_1,\dots,x_{n+1}):=Z_n(x_1,\dots,x_n)x_{n+1}Z_n(x_1,\dots,x_n).
\]
Observe that in the word $Z_n$ the letter $x_i$, $i=1,\dots,n$, occurs $2^{n-i}$ times and the length of $Z_n$ is $2^n-1$.

Finally, recall a word $v$ is called an \emph{isoterm} relative to a semigroup $S$ if the only word $v'$ such that $S$ satisfies the identity $v\bumpeq v'$ is the word $v$ itself.

Now we state the main result of \cite{ACHLV15} in a form that it convenient for the use in the present note.

\begin{theorem}[{\mdseries\cite[Theorem~6]{ACHLV15}}]
\label{thm:main}
The identities of a \sgp\ $S$ have no finite basis provided that:
\begin{itemize}
\item[\emph{(i)}] $S$ lies in the Mal'cev product $\mathbf{Com}\malcev\mathbf{Fin}$, and
\item[\emph{(ii)}] each Zimin word is an isoterm relative to $S$.
\end{itemize}
\end{theorem}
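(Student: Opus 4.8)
The plan is to establish directly that, under hypotheses (i) and (ii), no finite set of identities of $S$ can axiomatize all of them. I would organize the argument around the familiar ``bounded number of letters'' reduction. It suffices to produce, for every $n$, an identity $\phi_n$ that holds in $S$ and involves more than $n$ distinct letters, together with a semigroup $A_n$ that satisfies \emph{every} identity of $S$ in at most $n$ distinct letters but fails $\phi_n$. Granting this, suppose some finite set $\Sigma$ of identities of $S$ were a basis, and let $n_0$ bound the number of distinct letters occurring in the members of $\Sigma$. Each member of $\Sigma$ is then an identity of $S$ in at most $n_0$ letters, so $A_{n_0}$ satisfies $\Sigma$; since satisfaction passes to all consequences of a set of identities, $A_{n_0}$ would satisfy every identity of $S$, including $\phi_{n_0}$ --- contradicting the choice of $A_{n_0}$. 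Thus the theorem reduces to constructing the pairs $(\phi_n,A_n)$.

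Hypothesis (i) is used to manufacture the $\phi_n$ and to verify that $S$ satisfies them. Fix the witnessing congruence $\theta$, write $T=S/\theta$ for its finite quotient and $\pi\colon S\to T$ for the quotient map. The two facts I would extract are: first, in the finite semigroup $T$ the uniform power $\omega=|T|!$ sends every element to an idempotent; second, by the definition of $\mathbf{Com}\malcev\mathbf{Fin}$ the $\theta$-class lying over any idempotent of $T$ is a \emph{commutative} subsemigroup of $S$. Together these say that two sub-values of a word commute in $S$ as soon as both are confined, under every substitution, to one and the same such class. Each $\phi_n$ is then a rearrangement identity built on the Zimin word $Z_{n+1}$, whose nested shape $Z_{k+1}=Z_k x_{k+1} Z_k$ is exploited as follows: under any substitution the $T$-images of the successive factors $Z_1,Z_2,\dots$ obey the recursion $t_{k+1}=t_k\,\pi(x_{k+1})\,t_k$, so once $n$ exceeds a bound depending only on $|T|$, a pigeonhole coincidence $t_i=t_j$ produces an idempotent of $T$ and hence a factor whose value is pinned into a commutative $\theta$-class. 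The identity $\phi_n$ transposes material that this analysis confines to a common such class, and verifying that $S$ satisfies $\phi_n$ is then a fiber-commutativity computation, made uniform over all substitutions by the pigeonhole.

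Hypothesis (ii) is used to build the $A_n$. I would take $A_n$ to be a finite ``Zimin gadget'' in which the transposition effected by $\phi_n$ genuinely alters the value of the word, so that $A_n$ fails $\phi_n$; the real content is that $A_n$ nonetheless satisfies every identity of $S$ in at most $n$ letters. Here the isoterm hypothesis is decisive: since each Zimin word is an isoterm relative to $S$, no identity of $S$ can disturb an occurrence of a Zimin word, so if an identity of $S$ displays a $Z_k$-shaped factor on one side it must display the matching factor on the other. Combining this rigidity with the unavoidability of Zimin words --- every sufficiently long word over a bounded alphabet contains an instance of a Zimin word --- I would argue that every identity of $S$ in at most $n$ letters is ``scaffold preserving,'' and I would design $A_n$ precisely so that it validates the scaffold-preserving identities. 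The deep, pigeonhole-supplied commutation performed by $\phi_n$ is then invisible to any identity short enough to be trapped inside a Zimin scaffold, which is exactly why it survives in $S$ yet can be broken by $A_n$.

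The step I expect to be the main obstacle is the meshing of the two hypotheses in the construction of $(\phi_n,A_n)$. On one side, one must make the verification that $S$ satisfies $\phi_n$ uniform, even though the pigeonhole coincidence $t_i=t_j$ occurs at a position that depends on the substitution; this forces $\phi_n$ to be stated so that the commutation it performs is licensed \emph{wherever} the idempotent sandwich happens to appear. On the other side, one must turn the qualitative assertion ``every Zimin word is an isoterm'' into a genuine structural normal form for all identities of $S$ in a bounded number of letters, and then engineer the finite gadget $A_n$ to validate exactly those identities while breaking $\phi_n$. Balancing the parameter $n$ so that it is simultaneously large enough to force an idempotent block inside $Z_{n+1}$ and small enough that every $n$-letter identity remains scaffold preserving is the delicate heart of the argument; it is the same balance that underlies the analysis of inherently non-finitely based finite semigroups, here transported to the non-finite setting made available by membership in $\mathbf{Com}\malcev\mathbf{Fin}$.
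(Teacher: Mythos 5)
You should first note that the paper does not prove this statement at all: it is imported verbatim as Theorem~6 of the cited source \citep{ACHLV15}, so the only fair comparison is with that external proof. Measured against it, your proposal reproduces the correct and standard outer reduction --- if a finite basis $\Sigma$ existed, all its members would use at most $n_0$ letters, so it suffices to exhibit, for each $n$, an identity $\phi_n$ of $S$ together with a semigroup $A_n$ satisfying every at-most-$n$-letter identity of $S$ but failing $\phi_n$ --- and this part is sound. But everything after the reduction is a declaration of intent rather than an argument: $\phi_n$ is never written down, $A_n$ is never constructed (``Zimin gadget'' is a name, not a definition), ``scaffold preserving'' is never defined, and the verification that $A_n$ models all $n$-letter identities of $S$ is exactly the step you yourself label the ``delicate heart.'' Since the two constructions you defer \emph{are} the theorem, this is a genuine gap, not a missing detail.

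Two places where the plan, as stated, would actually fail. First, hypothesis (ii) says only that the literal word $Z_n$ admits no nontrivial identity $Z_n\bumpeq z$ in $S$; it says nothing about occurrences of \emph{substitution instances} of Zimin words as factors inside longer words. Your inference ``no identity of $S$ can disturb an occurrence of a Zimin word, so every bounded-letter identity of $S$ is scaffold preserving'' therefore does not follow from (ii); upgrading the isoterm property to factor-level rigidity under arbitrary substitutions --- via unavoidability of Zimin words in the sense of encountered \emph{instances}, not literal subwords --- is precisely the hard combinatorial content of Sapir-style non-finite-basis arguments and of the proof in \citep{ACHLV15}. Second, your description of $\phi_n$ is incoherent as it stands: an identity is a fixed pair of words and cannot ``license commutation wherever the idempotent sandwich happens to appear,'' because the pigeonhole coincidence $t_i=t_j$ occurs at a substitution-dependent position. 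Actual exploitations of $S\in\mathbf{Com}\malcev\mathbf{Fin}$ pin the idempotent \emph{syntactically}, e.g.\ by raising designated blocks to a power divisible by $|T|!$ so that their images in the finite quotient $T$ are idempotent under \emph{every} substitution, and by arranging the two transposed blocks to have equal image in $T$ so that their values lie in one commutative $\theta$-class; your sketch contains no such device. Until $\phi_n$ and $A_n$ are exhibited and both verifications ($S\models\phi_n$ uniformly; $A_n\models$ all $n$-letter identities of $S$) are carried out, what you have is a plausible roadmap consistent with the known strategy, not a proof.
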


\begin{proof}[Proof of Theorem~\ref{thm:K_infty}] 
From the definition of the free product, it readily follows that each non-identity element of the monoid $K_\infty$ can be uniquely represented as an alternating product of the generators $e$ and $b$.

First we show that $K_\infty$ satisfies the condition (i) of Theorem \ref{thm:main}. Consider the monoid $T$ defined by the following presentation:
\[
T=\langle f,g \mid f^2=fgf=f,\ g^2=1\rangle.
\]
It is easy to compute that $T$ consists of 6 elements: $1,g,f,fg,gf,gfg$, all of which except $g$ are idempotents. The map $e\mapsto f,\ b\mapsto g$ extends to a monoid homomorphism $K_\infty\to T$. The kernel $\theta$ of this homomorphism is a congruence on the monoid $K_\infty$ with two singleton classes $1\theta=\{1\}$ and $b\theta=\{b\}$ and four infinite classes:
\begin{align*}
e\theta&=\{(eb)^ke\mid k\ge0\}, &be\theta&=\{(be)^m\mid m>0\},\\
eb\theta&=\{(eb)^\ell\mid \ell>0\},& beb\theta&=\{(be)^nb\mid n>0\}.
\end{align*}
The infinite $\theta$-classes and the $\theta$-class $1\theta=\{1\}$ are subsemigroups in $K_\infty$. Clearly, the latter subsemigroup is commutative, and a direct computation shows that so are the four other subsemigroups. Thus, the monoid $K_\infty$ lies in the Mal'cev product $\mathbf{Com}\malcev\mathbf{Fin}$.

Now we aim to verify the condition (i) of Theorem \ref{thm:main}, that is, we want to show that no non-trivial identity of the form $Z_n\bumpeq z$ may hold in $K_\infty$. This verification repeats the argument used by Shneerson and the author \citeyearpar{SV17} for the monoid $J_\infty$, but we reproduce it for the reader's convenience. We induct on $n$. Observe that the subsemigroup $eb\theta=\{(eb)^\ell\mid\ell>0\}$ of $K_\infty$ is isomorphic to the additive semigroup of positive integers $\mathbb{N}$. It is well known that every identity $u\bumpeq v$ satisfied by $\mathbb{N}$ is \emph{balanced}, that is, every letter occurs the same number of times in $u$ and in $v$. Therefore if an identity of the form $Z_n\bumpeq z$ holds in $K_\infty$, it must be balanced, and this immediately implies that for $n=1$ any such identity must be trivial. Now assume that our claim holds for some $n$ and let a word $w=w(x_1,\dots,x_{n+1})$ be such that the identity $Z_{n+1}\bumpeq w$ holds in $K_\infty$. If we substitute 1 for $x_1$ in this identity, we conclude that also the identity
\[
Z_{n+1}(1,x_2,\dots,x_{n+1})\bumpeq w(1,x_2,\dots,x_{n+1})
\]
should hold in $k_\infty$. However, the word $Z_{n+1}(1,x_2,\dots,x_{n+1})$ is nothing but the Zimin word $Z_n(x_2,\dots,x_{n+1})$ and by the induction assumption we have $w(1,x_2,\dots,x_{n+1})=Z_n(x_2,\dots,x_{n+1})$. This, together with the fact that the identity $Z_{n+1}\bumpeq w$ must be balanced, means that the word $w(x_1,\dots,x_{n+1})$ is obtained from the Zimin word $Z_n(x_2,\dots,x_{n+1})$ by inserting $2^n$ occurrences of the letter $x_1$ in the latter. If the insertion is made in a way such that the occurrences of $x_1$ alternate with $2^n-1$ occurrences of $x_2,\dots,x_{n+1}$, then $w$ coincides with $Z_{n+1}$, and we are done. It remains to verify that any other way of inserting $2^n$ occurrences of $x_1$ in $Z_n(x_2,\dots,x_{n+1})$ produces a word $w$ such that the identity $Z_{n+1}\bumpeq w$ fails in $K_\infty$. Indeed, substitute $e$ for $x_1$ and $b$ for all other letters in this identity. The value of the left-hand side under this substitution is $(eb)^{2^n-1}e$. On the other hand, since at least two occurrences of $x_1$ in the word $w$ are adjacent, we are forced to apply the relation $e^2=e$ at least once to get a representation of the value of the right-hand side as an alternating product of the generators $e$ and $b$. Hence $e$ occurs less than $2^n$ times in this representation, and therefore, the value cannot be equal to $(eb)^{2^n-1}e$.

Theorem~\ref{thm:K_infty} now follows from Theorem~\ref{thm:main}. 
\end{proof}

Taking into account the discussion preceding the formulation of Theorem~\ref{thm:K_infty}, we arrive at the following  
\begin{corollary}
\label{cor:final}
For each pair of two-element monoids, the identities of their free product admit no finite basis.
\end{corollary}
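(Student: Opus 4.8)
The plan is to reduce the corollary to a short case analysis that the preceding discussion has already prepared. First I would invoke the classification established before Theorem~\ref{thm:K_infty}: a free product $M_1*M_2$ of monoids can only be a candidate for a finite identity basis when it satisfies a nontrivial identity, and the discussion shows this forces both factors to have exactly two elements (if either factor is trivial the product reduces to the other factor, and if either factor has at least three elements the product contains a free subsemigroup and hence satisfies no nontrivial identity). Since there are, up to isomorphism, only two two-element monoids, namely $\mathbb{I}_2$ and $\mathbb{C}_2$, every free product of a pair of two-element monoids is isomorphic to one of $J_\infty=\mathbb{I}_2*\mathbb{I}_2$, $D_\infty=\mathbb{C}_2*\mathbb{C}_2$, or $K_\infty=\mathbb{I}_2*\mathbb{C}_2$. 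Thus it suffices to verify that each of these three monoids is non-finitely based.

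I would then dispatch the three cases by citing the facts already secured in the text. For $K_\infty$ the statement is precisely Theorem~\ref{thm:K_infty}, just proved via the Zimin-isoterm criterion of Theorem~\ref{thm:main}. For $J_\infty$, which is the semigroup $S_2$ with an identity element adjoined, I would appeal to the theorem of Shneerson and the author \citeyearpar{SV17}, where the identities of this monoid are shown to admit no finite basis. For $D_\infty$, the infinite dihedral group, I would use its structural description recalled above: it is metabelian, being an extension of the infinite cyclic group generated by $ba$ by a two-element group, and it is neither abelian nor of finite exponent. By Sapir's Proposition~6 \citep{Sa87}, a group that is an extension of an abelian subgroup by a group of finite exponent and whose semigroup identities are finitely based must be abelian or of finite exponent; since $D_\infty$ is neither, its semigroup identities cannot be finitely based.

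Collecting these three cases completes the proof. I expect no genuine obstacle to remain at this stage: the corollary is merely a consolidation of three independent non-finite-basis results, each of which has been established in the preceding text. The one substantive ingredient is the $K_\infty$ case, which is the novel contribution of the note and which rested on the Mal'cev-product membership and the Zimin isoterm verification carried out in the proof of Theorem~\ref{thm:K_infty}; the remaining two cases are recorded in the literature.
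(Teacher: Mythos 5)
Your proposal is correct and matches the paper's own (very terse) justification: the corollary is obtained exactly by combining Theorem~\ref{thm:K_infty} with the two facts recorded in the discussion preceding it, namely that $J_\infty$ is non-finitely based by \citet{SV17} and that $D_\infty$ is non-finitely based via Proposition~6 of \citet{Sa87}. The only inessential difference is that your opening remarks about trivial or three-element factors are not needed for the corollary as stated, since it already restricts attention to pairs of two-element monoids.
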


\begin{remark}
\label{rem:D_infty}
We have already mentioned in passing that our technique could have been used to verify that the semigroup identities of the group $D_\infty$ admit no finite basis. Inspecting the above proof of Theorem~\ref{thm:K_infty}, the reader may see that this is indeed the case. The containment $D_\infty\in \mathbf{Com}\malcev\mathbf{Fin}$ immediately follows from the fact that $D_\infty$ is an extension of the infinite cyclic group by the two-element group. The inductive proof that all Zimin words are isoterms relative to $K_\infty$ works also for $D_\infty$ with the following minimal adjustments: one has to change the generator $e$ of $K_\infty$ to the generator $a$ of $D_\infty$ and use the relation $a^2=1$ from the presentation~\eqref{eq:2} instead of the relation $e^2=e$ from the presentation~\eqref{eq:3}.
\end{remark}

\begin{remark}
\label{rem:difference}
The similarity in the arguments used here and in \citep{SV17} to establish the absence of finite identity bases for each of the monoids $J_\infty$, $D_\infty$, and $K_\infty$ may lead one to the conjecture that some of the three monoids or perhaps all of them satisfy exactly the same identities. However, this is not the case. Indeed, the results of     \citep{SV17} easily imply that the monoid $J_\infty$ satisfies the identity $x^2yx\bumpeq xyx^2$. This identity fails in both $D_\infty$ and $K_\infty$ as one can see by evaluating the variables $x$ and $y$ at the generators of the latter monoids. Further, since the group $D_\infty$ is an extension of the infinite cyclic group by the two-element group, the identity $x^2y^2\bumpeq y^2x^2$ holds in $D_\infty$, but it fails in $J_\infty$ and $K_\infty$ as again is revealed by evaluating the variables at the generators.

The observations just made suffice to claim that the sets $\Id(J_\infty)$, $\Id(D_\infty)$, and $\Id(K_\infty)$ of identities holding in respectively $J_\infty$, $D_\infty$, and $K_\infty$ are pairwise distinct; moreover, the sets $\Id(J_\infty)$ and $\Id(D_\infty)$ are incomparable. In fact, one can show that $K_\infty$ satisfies the identity $x^4yx^2\bumpeq x^2yx^4$ that fails in $D_\infty$ so that the sets $\Id(D_\infty)$ and $\Id(K_\infty)$ are incomparable as well. One can also show that $\Id(J_\infty)$ is a proper subset of  $\Id(K_\infty)$. These results rely on a characterization of $\Id(K_\infty)$ which will be the subject of a separate paper.
\end{remark}

\bigskip

\small

\noindent\textbf{Acknowledgements.} The author acknowledges support from the Ministry of Education and Science of the Russian Federation, project no.\ 1.3253.2017, the Competitiveness Program of Ural Federal University, and from the Russian Foundation for Basic Research, project no.\ 17-01-00551.

\end{document}